\begin{document}
\title{Radial prescribing scalar curvature on $RP^n$}
\author{\href{mailto:hongliu8808@163.com}{Liu Hong}}
\date{Mar 7, 2015}

\maketitle
\tableofcontents
\numberwithin{equation}{section}

\newtheorem*{de}{Definition}
\newtheorem{lem}{Lemma}
\newtheorem{cor}{Corollary}
\newtheorem{thm}{Theorem}
\newtheorem*{eg}{Example}
\newtheorem{pro}{Proposition}
\newtheorem{rmk}{Remark}
\ \\
\thispagestyle{headings}
\begin{abstract}
The study of radial prescribing scalar curvature of X.Xu and P.C.Yang [2] in 1993 showed a nonexistence result on $S^2$. Later in 1995, W.Chen and C.Li [2] generalized the nonexistence result to higher dimensions. G.Bianchi and E.Egnell [1] suggested that there may exist some non-negative smooth radial function which cannot be scalar curvature in the standard conformal class of $RP^n.$ However, in our paper, we prove that all smooth radial non-negative smooth functions which are positive on the pole could be prescribing scalar curvatures. We consider the quotient $\frac{v_\lambda}{V_\lambda}$ rather than the difference $v_\lambda-V_\lambda$ as in [1]. This trick yields a concise argument of the existence. Consequently, their counter examples stated in [1] cannot be true on $RP^n.$
\end{abstract}

\section{Introduction}
First of all, we use a simple argument to show that their examples cannot be true if $K(r)=K(\frac{1}{r}).$ So the counter examples constructed by G.Bianchi and E.Egnell in [1] do not exist on $RP^n.$

We regard $RP^n$ as the quotient space of the standard sphere $S^n$ under the antipodal map. $RP^n$ has a canonical conformal class inheriting from the standard class of $S^n.$

As in [1], by the north pole projection which projects the equator of $S^n$ onto the unit sphere of $\mathbb{R}^n$, the prescribing scalar curvature problem on $S^n$ is reduced to the following differential equation on $\mathbb{R}^n$
$$\Delta u+Ku^p=0\ \ ,\ \ p=\frac{n+2}{n-2}\ \ ,$$
where $K$ is a radial continuous function and $v^p=sgn(v)|v|^p$. We could write it down using polar coordinates as
$$v''(r)+\frac{n-1}{r}v'(r)+Kv^p(r)=0.$$
Due to the symmetry, we should have $v'(0)=0.$ So we consider the initial value problem of the ODE:
$$\left\{
  \begin{array}{ll}
    v''(r)+\frac{n-1}{r}v'(r)+Kv^p(r)=0, & \hbox{} \\
    v(0)=\lambda,\ v'(0)=0, & \hbox{}
  \end{array}
\right.
$$
Noticing that this initial value problem always has a short time solution, we write it as $v_\lambda.$
As we assume K is the prescribing scalar curvature on $RP^n$, $K(r)=K(\frac{1}{r})$ and u should be invariant under the Kelvin transformation. Namely,
$$v(r)=r^{2-n}v(\frac{1}{r})\ \ .$$
Thus, we need to solve the equation system
\begin{equation}
 \begin{cases}
  v''(r)+\frac{n-1}{r}v'+Kv^p=0, & \hbox{}\\
  v(0)=\lambda,\ v'(0)=0, \\
  v(r)=r^{2-n}v(\frac{1}{r}).
 \end{cases}
\end{equation}
When we take the derivative on $r=1,$ we get
$$(n-2)v(1)+2v'(1)=0$$
called gluing equation. So we can define the gluing function
$$G(\lambda):=(n-2)v_\lambda(1)+2v'_\lambda(1)\ \ .$$
We define the following quantities as in [1]:
$$\lambda_0:=\sup\{\alpha:v_\lambda(r)\ exists\ and\ is\ positive\ for\ all\ \lambda\in(0,\alpha),\ r\in[0,1]\},$$
$$\lambda_\infty:=\sup\{\alpha:v_\lambda(r)\ exists\ and\ is\ positive\ for\ all\ \lambda\in(\alpha,\infty),\ r\in[0,1]\}.$$
In this paper, we always assume $K(0)>0, K\geq0$ and K is continuous.

We restate some results in [1] first.

\begin{lem}
For any $\varepsilon\in[0,1],$
$$a(\varepsilon):=v_\lambda(\varepsilon)+\frac{\varepsilon v'_\lambda(\varepsilon)}{n-2}\ \ ,$$
$$b(\varepsilon):=-\frac{\varepsilon^{n-1}v'_\lambda(\varepsilon)}{n-2}\ \ ,$$
$$\gamma(\varepsilon):=|a(\varepsilon)|+|b(\varepsilon)\varepsilon^{n-2}|=|v_\lambda(\varepsilon)+\frac{\varepsilon v'_\lambda(\varepsilon)}{n-2}|+|\frac{\varepsilon v'_\lambda(\varepsilon)}{n-2}|\ \ .$$
Then we have the integral formulae
$$v_\lambda(r)=a(\varepsilon)+b(\varepsilon)r^{2-n}-\int_\varepsilon^rs^{1-n}\int_\varepsilon^st^{n-1}v_\lambda^pKdtds\ \ ,$$
$$v'_\lambda(r)=(2-n)b(\varepsilon)r^{1-n}-r^{1-n}\int_\varepsilon^rt^{n-1}v_\lambda^pKdt$$
and
$$\gamma(\varepsilon)<\frac{1}{2}(\frac{n}{\|K\|_\infty})^\frac{1}{p-1}\Longrightarrow|v_\lambda(r)|<2\gamma(\varepsilon),\ \forall r\in[\varepsilon,1]\ \ .$$
\end{lem}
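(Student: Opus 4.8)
The statement has two essentially independent parts: the two integral identities, which follow from integrating the ODE twice, and the a~priori bound on $|v_\lambda|$, which I would obtain by a continuity (bootstrap) argument.

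\textbf{Integral formulae.} I would start from the observation that the equation is equivalent to $(r^{n-1}v_\lambda')'=-r^{n-1}Kv_\lambda^p$. Integrating over $[\varepsilon,r]$ gives $r^{n-1}v_\lambda'(r)=\varepsilon^{n-1}v_\lambda'(\varepsilon)-\int_\varepsilon^r t^{n-1}v_\lambda^pK\,dt$; dividing by $r^{n-1}$ and using $\varepsilon^{n-1}v_\lambda'(\varepsilon)=-(n-2)b(\varepsilon)$ yields the stated formula for $v_\lambda'$. Integrating that identity once more over $[\varepsilon,r]$, with $\int_\varepsilon^r s^{1-n}\,ds=(r^{2-n}-\varepsilon^{2-n})/(2-n)$, and collecting the constant terms via $v_\lambda(\varepsilon)-b(\varepsilon)\varepsilon^{2-n}=v_\lambda(\varepsilon)+\varepsilon v_\lambda'(\varepsilon)/(n-2)=a(\varepsilon)$ produces the formula for $v_\lambda$. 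This part is routine.

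\textbf{Pointwise bound.} Here I would argue by contradiction. At $r=\varepsilon$ the double integral vanishes, so $v_\lambda(\varepsilon)=a(\varepsilon)+b(\varepsilon)\varepsilon^{2-n}$ and (reading the term $|b(\varepsilon)\varepsilon^{n-2}|$ in the definition of $\gamma$ as $|b(\varepsilon)|\varepsilon^{2-n}=\varepsilon|v_\lambda'(\varepsilon)|/(n-2)$, which is the second expression given) $|v_\lambda(\varepsilon)|\le\gamma(\varepsilon)$. Assuming $\gamma(\varepsilon)>0$, suppose that along the solution there is a first $r_1\in(\varepsilon,1]$ with $|v_\lambda(r_1)|=2\gamma(\varepsilon)$, so $|v_\lambda|\le 2\gamma(\varepsilon)$ on $[\varepsilon,r_1]$. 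Two estimates feed into the contradiction. For the homogeneous part: since $2-n<0$ and $r\ge\varepsilon$, one has $|a(\varepsilon)+b(\varepsilon)r^{2-n}|\le|a(\varepsilon)|+\varepsilon^{n-1}|v_\lambda'(\varepsilon)|r^{2-n}/(n-2)\le|a(\varepsilon)|+\varepsilon|v_\lambda'(\varepsilon)|/(n-2)=\gamma(\varepsilon)$ for all $r\in[\varepsilon,1]$. For the nonlinear part: bounding $|v_\lambda|\le 2\gamma(\varepsilon)$, $|K|\le\|K\|_\infty$ and using $\int_\varepsilon^s t^{n-1}\,dt\le s^n/n$ gives $\int_\varepsilon^{r_1}s^{1-n}\int_\varepsilon^s t^{n-1}|v_\lambda|^p|K|\,dt\,ds\le(2\gamma(\varepsilon))^p\|K\|_\infty(r_1^2-\varepsilon^2)/(2n)\le(2\gamma(\varepsilon))^p\|K\|_\infty/(2n)$, the last step using $r_1\le1$. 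Substituting these into the integral formula for $v_\lambda$ at $r_1$ gives $2\gamma(\varepsilon)=|v_\lambda(r_1)|\le\gamma(\varepsilon)+2^p\|K\|_\infty\gamma(\varepsilon)^p/(2n)$, hence $\gamma(\varepsilon)^{p-1}\ge n/(2^{p-1}\|K\|_\infty)$, i.e. $\gamma(\varepsilon)\ge\frac12(n/\|K\|_\infty)^{1/(p-1)}$, contradicting the hypothesis. Thus no such $r_1$ exists and $|v_\lambda(r)|<2\gamma(\varepsilon)$ on $[\varepsilon,1]$.

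\textbf{Main obstacle.} The only delicate point is legitimizing the ``first exit time'' $r_1$, i.e. ruling out that $v_\lambda$ ceases to exist (blows up) inside $[\varepsilon,1]$ before reaching level $2\gamma(\varepsilon)$. I would handle this by running the argument on the maximal interval of existence $[\varepsilon,R)$: the computation above shows that as long as $|v_\lambda|<2\gamma(\varepsilon)$ the bound improves to $|v_\lambda|<\gamma(\varepsilon)+(\text{small})<2\gamma(\varepsilon)$ strictly, and on such an interval the formula for $v_\lambda'$ also yields a bound on $|v_\lambda'|$ (as $r$ stays bounded below by $\varepsilon>0$); a uniform bound on $(v_\lambda,v_\lambda')$ prevents finite‑time blow‑up, so $R>1$ and the bound holds on all of $[\varepsilon,1]$. (If $\gamma(\varepsilon)=0$ then $a(\varepsilon)=v_\lambda'(\varepsilon)=0$, so $v_\lambda\equiv0$ on $[\varepsilon,1]$ by uniqueness, a harmless degenerate case.)
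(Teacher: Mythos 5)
Your proposal is correct and follows essentially the same route as the paper: both derive the integral formulae from the identity $(r^{n-1}v_\lambda')'=-Kr^{n-1}v_\lambda^p$ and obtain the pointwise bound by a first-exit-time contradiction, estimating the homogeneous part by $\gamma(\varepsilon)$ and the nonlinear part by $(2\gamma(\varepsilon))^p\|K\|_\infty/(2n)$. You are in fact somewhat more careful than the paper, which silently assumes existence on $[\varepsilon,1]$ and does not address the continuation issue or the typo $|b(\varepsilon)\varepsilon^{n-2}|$ versus $|b(\varepsilon)|\varepsilon^{2-n}$ that you correctly resolve.
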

\begin{proof}
The integral formulae follow from the following identity
$$(r^{n-1}v'_\lambda)'=r^{n-1}(v''_\lambda+\frac{n-1}{r}v'_\lambda)=r^{n-1}(-Kv_\lambda^p)=-Kr^{n-1}v_\lambda^p\ \ .$$
Suppose there is some value $r\in[\varepsilon,1]$ such that $|v_\lambda(r)|\geq2\gamma(\varepsilon),$ then we could pick the smallest $r_0$ among such values. The integral formula for $v_\lambda(r)$ implies
$$2\gamma(\varepsilon)=\sup\limits_{r\in[\varepsilon,r_0]}|v_\lambda(r)|\leq\gamma(\varepsilon)+(2\gamma(\varepsilon))^p\|K\|_\infty\frac{r_0^2}{2n}\leq\gamma(\varepsilon)+(2\gamma(\varepsilon))^p\frac{\|K\|_\infty}{2n}<2\gamma(\varepsilon)$$
which is a contradiction.
\end{proof}

\begin{cor}
When $\lambda\downarrow0,$ we have the following uniform asymptotic formulae on $[0,1]$
$$v_\lambda(r)=\lambda-\lambda^p\int_0^rs^{1-n}\int_0^st^{n-1}Kdtds+O(\lambda^{2p-1})\ \ ,$$
$$v'_\lambda(r)=-\lambda^p\int_0^rt^{n-1}Kdt+O(\lambda^{2p-1})\ \ .$$
\end{cor}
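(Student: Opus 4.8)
The plan is to bootstrap the crude estimate of Lemma~1 through the integral formulae. First I would apply Lemma~1 with $\varepsilon=0$: since $v_\lambda'(0)=0$ and $s^{n-1}v_\lambda'(s)\to0$ as $s\downarrow0$, one has $a(0)=\lambda$, $b(0)=0$, and $\gamma(0)=\lambda$. Hence for every $\lambda$ small enough that $\lambda<\tfrac12(n/\|K\|_\infty)^{1/(p-1)}$, the last implication of Lemma~1 guarantees that $v_\lambda$ extends to all of $[0,1]$ (the a priori bound precludes blow-up on $[\delta,1]$ for each $\delta>0$, and the short-time solution covers $[0,\delta]$) and that on $[0,1]$
$$v_\lambda(r)=\lambda-\int_0^r s^{1-n}\int_0^s t^{n-1}v_\lambda^pK\,dt\,ds,\qquad v_\lambda'(r)=-r^{1-n}\int_0^r t^{n-1}v_\lambda^pK\,dt,\qquad |v_\lambda(r)|\le2\lambda .$$

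The second step is a single round of bootstrapping. Plugging $|v_\lambda|\le2\lambda$ into the first identity and using the elementary kernel bound $\int_0^r s^{1-n}\int_0^s t^{n-1}\,dt\,ds=\tfrac{r^2}{2n}\le\tfrac1{2n}$, I get $|v_\lambda(r)-\lambda|\le\tfrac{\|K\|_\infty}{2n}(2\lambda)^p=O(\lambda^p)$ uniformly on $[0,1]$. In particular $v_\lambda(r)\in[\lambda/2,2\lambda]$ once $\lambda$ is small, so $v_\lambda>0$ there and the convention $v^p=\mathrm{sgn}(v)|v|^p$ is inactive. Since $p>1$, the map $t\mapsto t^p$ is $C^1$ with derivative bounded by $p(2\lambda)^{p-1}$ on $[\lambda/2,2\lambda]$, so the mean value theorem yields
$$|v_\lambda^p(r)-\lambda^p|\le p(2\lambda)^{p-1}\,|v_\lambda(r)-\lambda|=O(\lambda^{2p-1})\quad\text{uniformly on }[0,1],$$
and note $2p-1>p>1$, so this remainder is genuinely smaller than the leading correction to come.

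Finally I would substitute $v_\lambda^p=\lambda^p+O(\lambda^{2p-1})$ back into the two integral identities. In the formula for $v_\lambda$, the $\lambda^p$ part gives exactly $-\lambda^p\int_0^r s^{1-n}\int_0^s t^{n-1}K\,dt\,ds$, while the remainder is controlled by $\tfrac{\|K\|_\infty}{2n}\,O(\lambda^{2p-1})=O(\lambda^{2p-1})$ via the same kernel bound; this is the first claimed expansion. For $v_\lambda'$, the same substitution and the bound $r^{1-n}\int_0^r t^{n-1}K\,dt\le\tfrac{r}{n}\|K\|_\infty\le\tfrac1n\|K\|_\infty$ give $v_\lambda'(r)=-\lambda^p r^{1-n}\int_0^r t^{n-1}K\,dt+O(\lambda^{2p-1})$, uniformly on $[0,1]$.

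I do not anticipate a real obstacle here; the only points requiring care are (i) making every error term uniform in $r\in[0,1]$, which the bounded iterated-integral kernels provide, and (ii) confirming that $v_\lambda$ stays positive so that $v_\lambda^p=|v_\lambda|^p$, which follows from the bootstrap bound $v_\lambda\ge\lambda/2$ for $\lambda$ small.
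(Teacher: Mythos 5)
Your proposal is correct and follows essentially the same route as the paper: take $\varepsilon=0$ in the integral formulae, use the a priori bound $|v_\lambda|\le 2\lambda$ from Lemma~1 to get $v_\lambda=\lambda+O(\lambda^p)$, deduce $v_\lambda^p=\lambda^p+O(\lambda^{2p-1})$, and substitute back. The only difference is that you spell out the positivity of $v_\lambda$ and the uniform kernel bounds, which the paper leaves implicit.
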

\begin{proof}
Using the integral formula for $\varepsilon=0,$ we have
$$v_\lambda(r)=\lambda-\int_0^rs^{1-n}\int_0^st^{n-1}v_\lambda^pKdtds\ \ .$$
When $\lambda\downarrow0,$ we know $v_\lambda(r)<2\lambda, r\in[0,1],$ which implies $v_\lambda(r)=\lambda+O(\lambda^p)$ uniformly on $[0,1]$ by the integral formula. Thus $v_\lambda^p(r)=\lambda^p+O(\lambda^{2p-1})$ uniformly on $[0,1].$ We use this in the integral formula to obtain
$$v_\lambda(r)=\lambda-\lambda^p\int_0^rs^{1-n}\int_0^st^{n-1}Kdtds+O(\lambda^{2p-1})\ \ , uniformly\ on\ [0,1].$$
The asymptotic formula for $v'_\lambda$ is deduced analogously.
\end{proof}

\begin{cor}
$\lambda_0>0.$
\end{cor}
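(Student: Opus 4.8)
The plan is to show that when $\lambda$ is small the solution $v_\lambda$ stays within a factor $2$ of the constant $\lambda$ on all of $[0,1]$, which in particular keeps it positive, and hence $\lambda_0$ cannot be $0$. First I would apply Lemma 1 with $\varepsilon=0$. There $a(0)=v_\lambda(0)=\lambda$ and $b(0)=-\frac{0^{n-1}v'_\lambda(0)}{n-2}=0$, so $\gamma(0)=|a(0)|+|b(0)\cdot 0^{n-2}|=\lambda$. Consequently, whenever $\lambda<\frac12\bigl(\frac{n}{\|K\|_\infty}\bigr)^{\frac{1}{p-1}}$, Lemma 1 yields $|v_\lambda(r)|<2\lambda$ at every $r\in[0,1]$ for which the solution is defined. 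Combined with the standard ODE continuation principle — a solution that remains bounded on a compact $r$-interval bounded away from the coordinate singularity $r=0$ extends over that whole interval, and short-time existence past $r=0$ has already been granted — this a priori bound forces $v_\lambda$ to exist on the entire interval $[0,1]$.

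Next I would feed the bound $|v_\lambda|<2\lambda$ back into the integral identity $v_\lambda(r)=\lambda-\int_0^r s^{1-n}\int_0^s t^{n-1}v_\lambda^pK\,dt\,ds$ (the $\varepsilon=0$ case used in the proof of Corollary 1). The double integral is bounded in absolute value by $(2\lambda)^p\|K\|_\infty\frac{r^2}{2n}\le (2\lambda)^p\frac{\|K\|_\infty}{2n}=:C\lambda^p$, so $v_\lambda(r)\ge \lambda-C\lambda^p=\lambda(1-C\lambda^{p-1})$ for all $r\in[0,1]$. The key structural fact is that $p=\frac{n+2}{n-2}>1$, so $\lambda^{p-1}\to 0$ as $\lambda\downarrow 0$; choosing $\lambda$ small enough that $C\lambda^{p-1}<\frac12$ gives $v_\lambda(r)\ge \lambda/2>0$ on $[0,1]$. (Equivalently, one could just quote the asymptotic formula of Corollary 1, noting that $\lambda^p$ and $\lambda^{2p-1}$ are both $o(\lambda)$.)

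Finally I would take $\alpha$ to be the minimum of the threshold $\frac12\bigl(\frac{n}{\|K\|_\infty}\bigr)^{\frac{1}{p-1}}$ from Lemma 1 and the threshold making $C\lambda^{p-1}<\frac12$; both are strictly positive, so $\alpha>0$. For every $\lambda\in(0,\alpha)$ the solution $v_\lambda$ exists and is positive throughout $[0,1]$, whence $\lambda_0\ge\alpha>0$ by definition.

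There is no serious obstacle here: the only points requiring a little care are the continuation argument that upgrades the a priori $L^\infty$ bound of Lemma 1 to genuine existence on the full interval $[0,1]$, and the use of $p>1$ to guarantee that the nonlinear correction $C\lambda^p$ is of strictly lower order than $\lambda$.
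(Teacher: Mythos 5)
Your proof is correct and follows essentially the route the paper intends: the paper states this corollary without a written proof, as an immediate consequence of Lemma 1 (with $\varepsilon=0$, $\gamma(0)=\lambda$) and the asymptotic formula of Corollary 1, which is exactly the argument you spell out, including the continuation step and the use of $p>1$ to make the correction $o(\lambda)$.
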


By Lemma 3.1 of [1] or our Corollary 1, we have when $0<\lambda<<1,$
$$(n-2)v_\lambda(1)+2v'_\lambda(1)=(n-2)\lambda+o(\lambda)>0\ \ .$$
If $K\in C^1, K(r)=K(0)+K_\rho r^\rho+o(r^\rho),K_\rho\neq0,\rho\in(\frac{n(n-2)}{n+2},n],$ by Lemma 3.4 in [1], we have $\lambda_\infty=\infty$ or
$$(n-2)v_\lambda(1)+2v'_\lambda(1)=[\frac{n(n-2)}{K(0)}]^\frac{n-2}{2}(2-n)\lambda^{-1}+o(\lambda^{-1})<0,\ \ when\ \lambda\uparrow\infty\ .$$

\begin{lem}
If $\exists\lambda>0$ s.t. $(n-2)v_\lambda(1)+2v'_\lambda(1)=0,$ then the equation has a positive solution on $[0,\infty)$.
\end{lem}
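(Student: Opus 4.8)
The plan is to build the global solution by \emph{gluing} $v_\lambda$ on $[0,1]$ to its Kelvin transform on $[1,\infty)$; the gluing equation $(n-2)v_\lambda(1)+2v'_\lambda(1)=0$ is precisely the condition under which the two pieces match to first order at $r=1$. Concretely, I would set
$$w(r):=\begin{cases} v_\lambda(r), & r\in[0,1],\\ r^{2-n}v_\lambda(1/r), & r\in[1,\infty), \end{cases}$$
which is well defined on $[0,\infty)$ (for $r>1$ one uses $1/r\in(0,1)$) and manifestly satisfies $w(r)=r^{2-n}w(1/r)$ on all of $(0,\infty)$, so once $w$ is shown to be a positive classical solution it descends to $RP^n$. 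The first step is to check that the Kelvin transform $\tilde v(r):=r^{2-n}v_\lambda(1/r)$ solves the same radial equation $\tilde v''+\frac{n-1}{r}\tilde v'+K\tilde v^p=0$ on $(0,\infty)$: this is the conformal invariance of the problem, since in $\mathbb{R}^n$ one has $\Delta\bigl(|x|^{2-n}u(x/|x|^2)\bigr)=|x|^{-n-2}(\Delta u)(x/|x|^2)$, the exponent $p=\frac{n+2}{n-2}$ forces $(2-n)p=-(n+2)$ so that the nonlinear term carries exactly the compensating weight, and the radial potential is unchanged because $K(r)=K(1/r)$; alternatively this is a short direct computation from the ODE. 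Hence both branches of $w$ solve the equation on their open domains.

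Next I would check the matching at $r=1$. Continuity is automatic. Differentiating the outer branch gives $\frac{d}{dr}\bigl(r^{2-n}v_\lambda(1/r)\bigr)=(2-n)r^{1-n}v_\lambda(1/r)-r^{-n}v'_\lambda(1/r)$, which at $r=1$ equals $(2-n)v_\lambda(1)-v'_\lambda(1)$; this agrees with the inner one-sided derivative $v'_\lambda(1)$ exactly when $(n-2)v_\lambda(1)+2v'_\lambda(1)=0$, i.e. under the hypothesis. So $w\in C^1([0,\infty))$. To upgrade $w$ to a classical solution, I would use the form $(r^{n-1}w')'=-Kr^{n-1}w^p$ from Lemma 1: on each side of $r=1$ the function $r^{n-1}w'$ is $C^1$, and at $r=1$ both its value and its derivative $-Kr^{n-1}w^p$ match across the junction (the derivative because $w\in C^1$, $K$ is continuous, and $t\mapsto t^p$ is $C^1$ for $p>1$); hence $r^{n-1}w'\in C^1([0,\infty))$ and $w$ solves the equation on all of $[0,\infty)$.

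Finally, positivity: on $[0,1]$ we have $v_\lambda>0$, and in fact with $K\geq0$ the identity $(r^{n-1}v'_\lambda)'=-Kr^{n-1}v_\lambda^p\leq0$ shows $v_\lambda$ is non-increasing for as long as it stays positive, so the hypothesis is to be understood in the regime where $v_\lambda$ does remain positive on $[0,1]$ (equivalently $v_\lambda(1)>0$), the only regime in which the asserted conclusion can hold; then for $r>1$ we get $w(r)=r^{2-n}v_\lambda(1/r)>0$ since $1/r\in(0,1)$, so $w>0$ throughout $[0,\infty)$ and the proof is complete. I do not expect a genuine obstacle: the gluing equation was reverse-engineered from the $C^1$-matching condition, so the only mildly delicate point is the regularity upgrade at $r=1$, which the $(r^{n-1}w')'$ formulation handles cleanly; the Kelvin-invariance computation, while routine, is the one place where one must use all three hypotheses $p=\frac{n+2}{n-2}$, $K$ radial, and $K(r)=K(1/r)$.
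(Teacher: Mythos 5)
Your gluing construction is correct and in fact more complete than the paper's, which simply asserts in one line that the Kelvin transformation extends the solution from $[0,1]$ to $[0,\infty)$; the verification of conformal invariance, the $C^1$ matching at $r=1$ under the gluing equation, and the regularity upgrade via $(r^{n-1}w')'$ are exactly the details the paper leaves out. The genuine gap is in the positivity step, which is the part of the lemma the paper actually spends its proof on. The hypothesis is only that \emph{some} $\lambda>0$ satisfies $G(\lambda)=(n-2)v_\lambda(1)+2v'_\lambda(1)=0$; since $v^p=\mathrm{sgn}(v)|v|^p$, the solution $v_\lambda$ continues past its zeros, and such a $\lambda$ may well have $v_\lambda$ changing sign on $[0,1]$ (indeed, when the lemma is invoked in Theorem 1, the root of $G$ is produced by an intermediate value argument that gives no control on the sign of $v_\lambda$ on $[0,1]$). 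Your resolution --- declaring that ``the hypothesis is to be understood in the regime where $v_\lambda$ does remain positive'' --- strengthens the hypothesis rather than proving the lemma as stated, and the parenthetical ``equivalently $v_\lambda(1)>0$'' is also not justified: once $v_\lambda$ crosses zero the sign convention reverses the monotonicity of $r^{n-1}v'_\lambda$, so $v_\lambda$ could in principle be positive again at $r=1$ after having been negative in between.

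The paper closes this gap by not gluing at the given $\lambda$ at all: it takes $\lambda_1$ to be the smallest positive root of $G$ (well defined and positive because $G(\lambda)=(n-2)\lambda+o(\lambda)>0$ as $\lambda\downarrow0$) and shows $v_{\lambda_1}>0$ on $[0,1]$. If $\lambda_0=\infty$ this is automatic; if $\lambda_0<\infty$ the paper argues that $G(\lambda_0)<0$, so the intermediate value theorem already yields a root of $G$ in $(0,\lambda_0)$, forcing $\lambda_1<\lambda_0$ and hence $v_{\lambda_1}>0$ on $[0,1]$ by the definition of $\lambda_0$. Your write-up needs this (or an equivalent) selection argument before the gluing can be applied; the gluing itself, which you carried out carefully, is the easier half.
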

\begin{proof}
If $v_\lambda>0$ in [0,1], one could use the Kelvin transformation
$$v_\lambda(r)=r^{2-n}v_\lambda(\frac{1}{r})$$
to expand the solution from $[0,1]$ to $[0,\infty).$\\
Now we could assume the smallest $\lambda$ such that the gluing equation holds is $\lambda_1.$ If $\lambda_0=\infty,$ then $v_\lambda>0$ in [0,1] always holds.
If $\lambda_0<\infty,$ from the definition, we know
$$v_{\lambda_0}(r)\geq0,\ \ \forall r\in[0,1]\ \ .$$
Thus, by the integral formula
$$v_{\lambda_0}'(r)=-r^{1-n}\int_0^rt^{n-1}v_{\lambda_0}^pKdtds$$
we know $v_{\lambda_0}'(1)<0.$ Thus
$$(n-2)v_{\lambda_0}(1)+2v'_{\lambda_0}(1)<0\ \ .$$
Meanwhile, we know
$$(n-2)v_\lambda(1)+2v'_\lambda(1)=(n-2)\lambda+o(\lambda)>0,\ \ when\ \lambda\downarrow0\ \ .$$
Hence $0<\lambda_1<\lambda_0$ and $v_{\lambda_1}>0$ in [0,1] by the definition of $\lambda_1.$
\end{proof}

\begin{cor}
If $\lambda_0<\infty,$ then (1.1) has a positive solution on $[0,\infty)$.
\end{cor}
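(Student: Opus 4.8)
The plan is to reduce everything to Lemma 2: it suffices to exhibit some $\lambda_1>0$ with $(n-2)v_{\lambda_1}(1)+2v'_{\lambda_1}(1)=0$, i.e.\ a zero of the gluing function $G$. By Corollary 1 we already know $G(\lambda)=(n-2)\lambda+o(\lambda)>0$ for all sufficiently small $\lambda>0$, so the task is to pinpoint a value of $\lambda$ at which $G$ has become negative and then to apply the intermediate value theorem; I would take that value to be $\lambda_0$ itself.

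The main step is thus to prove $G(\lambda_0)<0$. For every $\lambda\in(0,\lambda_0)$ the solution $v_\lambda$ is positive on $[0,1]$, and the integral formula $v'_\lambda(r)=-r^{1-n}\int_0^r t^{n-1}v_\lambda^pK\,dt\le 0$ shows $v_\lambda$ is non-increasing there, so $0<v_\lambda\le\lambda<\lambda_0$ on $[0,1]$ and $|v'_\lambda|\le n^{-1}\lambda_0^p\|K\|_\infty$. This uniform a priori bound, fed into the regular integral equation $v_\lambda(r)=\lambda-\int_0^r s^{1-n}\int_0^s t^{n-1}v_\lambda^pK\,dt\,ds$, yields continuous dependence up to $\lambda=\lambda_0$: the solution $v_{\lambda_0}$ exists on all of $[0,1]$, is the uniform limit of the $v_\lambda$, hence satisfies $0\le v_{\lambda_0}\le\lambda_0$ with $v_{\lambda_0}(0)=\lambda_0$ and (by the same integral formula, using $v_{\lambda_0}\ge0$) is non-increasing on $[0,1]$. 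Next I claim $v_{\lambda_0}(1)=0$: otherwise $v_{\lambda_0}\ge v_{\lambda_0}(1)>0$ throughout $[0,1]$, and then continuous dependence would keep $v_\lambda$ positive on $[0,1]$ for every $\lambda$ in a neighborhood of $\lambda_0$, contradicting the fact that $\lambda_0$ is the supremum in its definition. Finally, since $v_{\lambda_0}(0)=\lambda_0>0$ and $K(0)>0$ with $K\ge0$ continuous, the integrand $t^{n-1}v_{\lambda_0}^pK$ is strictly positive near $t=0$ and nonnegative elsewhere, so $v'_{\lambda_0}(1)=-\int_0^1 t^{n-1}v_{\lambda_0}^pK\,dt<0$. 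Hence $G(\lambda_0)=(n-2)v_{\lambda_0}(1)+2v'_{\lambda_0}(1)=2v'_{\lambda_0}(1)<0$.

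Since $G$ is continuous on $(0,\lambda_0]$, positive near $0$ and negative at $\lambda_0$, the intermediate value theorem produces $\lambda_1\in(0,\lambda_0)$ with $G(\lambda_1)=0$, and Lemma 2 then gives a positive solution of (1.1) on $[0,\infty)$. (One could equally well observe that this argument is exactly the one already embedded in the proof of Lemma 2, so the corollary is essentially a repackaging of it.) The only genuinely delicate point is the claim $v_{\lambda_0}(1)=0$, i.e.\ that positivity of $v_\lambda$ on $[0,1]$ cannot survive past $\lambda_0$; it rests on the uniform bound ruling out blow-up together with continuous dependence of the flow, whereas the positivity of $G$ near $0$ (Corollary 1) and the strict sign $v'_{\lambda_0}(1)<0$ coming from $K(0)>0$ are routine.
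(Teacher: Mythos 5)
Your argument is correct and is essentially the paper's own: the proof of this corollary in the paper simply invokes the computation already embedded in the proof of Lemma 2, namely that $G(\lambda)>0$ for small $\lambda$ while $G(\lambda_0)<0$, and then applies the intermediate value theorem together with Lemma 2. In fact you are slightly more careful than the paper, since you explicitly justify $v_{\lambda_0}(1)=0$ (via monotonicity, continuous dependence, and the supremum definition of $\lambda_0$), a step the paper leaves implicit but which is needed to pass from $v'_{\lambda_0}(1)<0$ to $(n-2)v_{\lambda_0}(1)+2v'_{\lambda_0}(1)<0$.
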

\begin{proof}
Since $v_{\lambda_0}$ is decreasing and non-negative from the continuous reliability on parameters, we know $v_{\lambda_0}$ exists on $[0,1]$. As in the proof of Lemma 2, $G(\lambda_0)<0$ and $G(\lambda)<0$ for small $\lambda.$ Thus $\exists\lambda_1\in(0,\lambda_0)$ such that $G(\lambda_1)=0$ and $v_{\lambda_1}$ is the desired solution.
\end{proof}

\begin{thm}
If $K\in C^1(0,1], K(r)=K(0)+K_\rho r^\rho+o(r^\rho),K_\rho\neq0,\rho\in(\frac{n(n-2)}{n+2},n],$ then a positive solution of (1.1) on $[0,\infty)$ exists.
\end{thm}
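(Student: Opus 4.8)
The plan is to produce, for every admissible $K$, a value $\lambda_1>0$ at which the gluing function $G$ vanishes and such that $v_{\lambda_1}>0$ on $[0,1]$; Lemma~2 then upgrades this to a genuine positive solution on $[0,\infty)$ via the Kelvin transform. There are two cases according to whether $\lambda_0$ is finite. If $\lambda_0<\infty$, Corollary~4 already gives the conclusion: $v_{\lambda_0}$ exists, is non-negative and decreasing on $[0,1]$, hence $v'_{\lambda_0}(1)<0$ forces $G(\lambda_0)<0$, while Corollary~1 gives $G(\lambda)=(n-2)\lambda+o(\lambda)>0$ for $\lambda$ small; by continuous dependence on the parameter $\lambda$, the intermediate value theorem supplies $\lambda_1\in(0,\lambda_0)$ with $G(\lambda_1)=0$, and by minimality of such a $\lambda_1$ (or simply because $\lambda_1<\lambda_0$) we have $v_{\lambda_1}>0$ on $[0,1]$.

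The remaining case is $\lambda_0=\infty$. Here $v_\lambda>0$ on $[0,1]$ for every $\lambda>0$, so the only thing to check is a sign change of $G$. For $\lambda$ small, $G(\lambda)>0$ as above. For the large-$\lambda$ behaviour I would invoke the dichotomy recorded just before Lemma~2 (this is Lemma~3.4 of [1], applicable precisely because of the hypothesis $K\in C^1$, $K(r)=K(0)+K_\rho r^\rho+o(r^\rho)$ with $K_\rho\neq 0$ and $\rho\in(\tfrac{n(n-2)}{n+2},n\,]$): either $\lambda_\infty=\infty$, or
$$G(\lambda)=\Bigl[\tfrac{n(n-2)}{K(0)}\Bigr]^{\frac{n-2}{2}}(2-n)\lambda^{-1}+o(\lambda^{-1})<0\quad\text{as }\lambda\uparrow\infty.$$
In the second subcase $G$ is eventually negative, so again the intermediate value theorem gives $\lambda_1$ with $G(\lambda_1)=0$ and, since $\lambda_0=\infty$, automatically $v_{\lambda_1}>0$ on $[0,1]$; done by Lemma~2.

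It remains to handle the subcase $\lambda_0=\infty$ and $\lambda_\infty=\infty$ simultaneously. I claim this forces existence directly: if $v_\lambda$ is positive on $[0,1]$ for all $\lambda$ and, moreover, positive for all large $\lambda$ on all of $[0,1]$ in the sense defining $\lambda_\infty$, one should be able to extract, as $\lambda\uparrow\infty$, a solution that is positive on the whole half-line — morally the ``bubble at infinity'' degenerates in a way that still glues. Concretely I would argue that $\lambda_\infty=\infty$ means $v_\lambda$ stays positive on $[0,1]$ for arbitrarily large $\lambda$, examine $G(\lambda)$ along such a sequence using the integral formulae of Lemma~1 with a suitably chosen $\varepsilon=\varepsilon(\lambda)$, and show $G$ must still change sign (or that a limiting profile solves the gluing equation). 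The main obstacle is exactly this borderline situation: ruling out that $G$ stays strictly positive for \emph{all} $\lambda$ when both $\lambda_0$ and $\lambda_\infty$ are infinite. The hypotheses on $\rho$ are there to control the competition between the constant term $K(0)$ and the perturbation $K_\rho r^\rho$ in the asymptotics of $v_\lambda$ near the ``ground state'' $U_\lambda$, and the quotient trick $v_\lambda/V_\lambda$ advertised in the abstract is presumably what makes this asymptotic comparison clean; I would carry it out there rather than with the difference $v_\lambda-V_\lambda$.
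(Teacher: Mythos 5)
Your first two cases reproduce the paper's argument faithfully: when $\lambda_0<\infty$ you invoke the corollary giving $G(\lambda_0)<0$ together with $G(\lambda)=(n-2)\lambda+o(\lambda)>0$ for small $\lambda$, and when the large-$\lambda$ asymptotic from Lemma 3.4 of [1] is available you conclude by the intermediate value theorem and the gluing lemma. Up to the labels (the $\lambda_0<\infty$ statement is Corollary 3, the gluing statement is Lemma 2), this is exactly what the paper does.

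The genuine gap is your ``remaining subcase'' $\lambda_0=\infty$ \emph{and} $\lambda_\infty=\infty$, which you leave open with a heuristic about a degenerating bubble. That subcase is empty, so nothing needs to be proved there. In [1] (and this is clearly the intended meaning here; the paper's $\sup$ in the definition of $\lambda_\infty$ is a typo for $\inf$, without which the definition is degenerate), $\lambda_\infty$ is the \emph{infimum} of the set of $\alpha$ such that $v_\lambda$ exists and is positive on $[0,1]$ for every $\lambda\in(\alpha,\infty)$, with the convention $\inf\emptyset=+\infty$. If $\lambda_0=\infty$, then $v_\lambda$ exists and is positive on $[0,1]$ for \emph{every} $\lambda>0$, so $\alpha=0$ belongs to that set and $\lambda_\infty=0<\infty$; equivalently, $\lambda_\infty=\infty$ forces $\lambda_0<\infty$, which is precisely how the paper phrases its first case: the branch of the Lemma 3.4 dichotomy in which no asymptotic formula is available is already absorbed into Corollary 3. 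Consequently no limiting profile, no analysis of $G$ along a sequence $\lambda\uparrow\infty$, and no use of the quotient $v_\lambda/V_\lambda$ is needed for this theorem --- that quotient trick only enters in Theorem 2, where the expansion $K(r)=K(0)+K_\rho r^\rho+o(r^\rho)$ is dropped and Lemma 3.4 of [1] can no longer be invoked. Once you observe that your third subcase is vacuous and delete it, your proof coincides with the paper's.
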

\begin{proof}
From Lemma 3.4 in [1],
$$(n-2)v_\lambda(1)+2v'_\lambda(1)=[\frac{n(n-2)}{K(0)}]^\frac{n-2}{2}(2-n)\lambda^{-1}+o(\lambda^{-1})<0,\ \ \lambda\uparrow\infty\ .$$
If $\lambda_\infty=\infty$, then $\lambda_0<\infty$. So the result follows from Corollary 3.\\
If
$$(n-2)v_\lambda(1)+2v'_\lambda(1)=[\frac{n(n-2)}{K(0)}]^\frac{n-2}{2}(2-n)\lambda^{-1}+o(\lambda^{-1})<0,\ \ \lambda\uparrow\infty\ ,$$
from
$$(n-2)v_\lambda(1)+2v'_\lambda(1)=(n-2)\lambda+o(\lambda)>0,\ \ when\ \lambda\downarrow0\ \ $$
we know that there exists one $\lambda>0$ such that $(n-2)v_\lambda(1)+2v'_\lambda(1)=0$. Thus the result follows from Lemma 1.
\end{proof}

\section{Main result}

One could check that the solution of
$$\left\{
  \begin{array}{ll}
    v''(r)+\frac{n-1}{r}v'+K(0)v^p=0, & \hbox{} \\
    v(0)=\lambda,\ v'(0)=0, & \hbox{}
  \end{array}
\right.
$$
is
$$V_\lambda(r)=\frac{\lambda}{[1+\frac{\lambda^{2\beta}K(0)}{n(n-2)}r^2]^\frac{1}{\beta}},\ \ \beta=\frac{p-1}{2}=\frac{2}{n-2}.$$

\begin{lem}
If $\exists \varepsilon>0$ such that $K\in C^\infty[0,\varepsilon)$ and $v_\lambda$ exists on $[0,1]$ for all $\lambda>0$, then $G(\lambda)>0$ for $\lambda$ large enough.
\end{lem}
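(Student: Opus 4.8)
The plan is to replace the difference $v_\lambda-V_\lambda$ used in [1] by the quotient $w_\lambda:=v_\lambda/V_\lambda$. Since $v_\lambda$ exists on $[0,1]$ by hypothesis, $V_\lambda>0$ everywhere, and $K\in C^\infty[0,\varepsilon)$, the function $w_\lambda$ is well defined on $[0,1]$, smooth near $r=0$, with $w_\lambda(0)=1$ and $w_\lambda'(0)=0$. Subtracting the equation for $V_\lambda$ (curvature $K(0)$) from that for $v_\lambda$ and dividing by $V_\lambda$ gives
$$w_\lambda''+\Big(\tfrac{n-1}{r}+2\tfrac{V_\lambda'}{V_\lambda}\Big)w_\lambda'+V_\lambda^{\,p-1}\big(Kw_\lambda^{\,p}-K(0)w_\lambda\big)=0;$$
writing $\mu:=\lambda^{2\beta}K(0)/(n(n-2))$ one has the explicit identities $V_\lambda'/V_\lambda=-(n-2)\mu r/(1+\mu r^2)$, $V_\lambda^{\,p-1}=n(n-2)\mu/\big(K(0)(1+\mu r^2)^2\big)$ and $V_\lambda^{\,p+1}=n(n-2)\mu\lambda^2/\big(K(0)(1+\mu r^2)^n\big)$, so the equation takes the divergence form $\big(r^{n-1}V_\lambda^{2}w_\lambda'\big)'=-r^{n-1}V_\lambda^{\,p+1}\big(Kw_\lambda^{\,p}-K(0)w_\lambda\big)$. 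Integrating from $0$ to $1$ gives the working identity
$$w_\lambda'(1)=-\frac{n(n-2)\mu\lambda^2}{K(0)\,V_\lambda(1)^2}\int_0^1\frac{s^{n-1}\big(K(s)w_\lambda(s)^{p}-K(0)w_\lambda(s)\big)}{(1+\mu s^2)^n}\,ds .$$

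Next I would reduce the statement to a single sign at the gluing radius $r=1$. With $v_\lambda=w_\lambda V_\lambda$ and $(n-2)V_\lambda(1)+2V_\lambda'(1)=(n-2)\lambda(1+\mu)^{-1/\beta-1}(1-\mu)$ one gets
$$G(\lambda)=\lambda(1+\mu)^{-1/\beta-1}\Big[(n-2)(1-\mu)\,w_\lambda(1)+2(1+\mu)\,w_\lambda'(1)\Big].$$
Since $\mu\to\infty$ and $\lambda(1+\mu)^{-1/\beta-1}>0$, and since the bracket equals $\mu\big(2w_\lambda'(1)-(n-2)w_\lambda(1)\big)+\big(2w_\lambda'(1)+(n-2)w_\lambda(1)\big)$, it suffices to prove that $2w_\lambda'(1)-(n-2)w_\lambda(1)>0$ for all large $\lambda$; that is, the bounded part (``$a$'') of $v_\lambda$ near $r=1$ must strictly dominate its $r^{2-n}$ part (``$b$''), equivalently $w_\lambda$ must be increasing steeply enough at $r=1$.

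The core is then the concentration analysis of $w_\lambda$ as $\lambda\to\infty$, in the rescaled variable $\sigma:=\sqrt{\mu}\,r\in[0,\sqrt{\mu}]$, $\tilde w_\lambda(\sigma):=w_\lambda(r)$, for which the equation becomes
$$\tilde w_{\sigma\sigma}+\Big(\tfrac{n-1}{\sigma}-\tfrac{2(n-2)\sigma}{1+\sigma^2}\Big)\tilde w_\sigma+\frac{n(n-2)}{K(0)(1+\sigma^2)^2}\big(K(\mu^{-1/2}\sigma)\,\tilde w^{\,p}-K(0)\,\tilde w\big)=0,$$
with $\tilde w_\lambda(0)=1$, $\tilde w_\lambda'(0)=0$. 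On each fixed compact $\sigma$-interval $K(\mu^{-1/2}\sigma)\to K(0)$, so the limiting equation is solved by $\tilde w\equiv1$, and by uniqueness together with the integral estimate of Lemma~1 applied to $w_\lambda$ one gets $\tilde w_\lambda\to1$ in $C^1_{\mathrm{loc}}$. For $\sigma\gg1$ the reaction coefficient $n(n-2)/\big(K(0)(1+\sigma^2)^2\big)=O(\sigma^{-4})$ is integrable on $[1,\infty)$, so in the outer range $\tilde w_\lambda$ is a small perturbation of a solution of $\tilde w_{\sigma\sigma}+\tfrac{3-n}{\sigma}\tilde w_\sigma=0$, i.e. of $A+B\sigma^{n-2}$; a Gronwall/comparison argument for $\tilde w_\lambda-1$, with source $-\,n(n-2)\big(K(\mu^{-1/2}\sigma)-K(0)\big)/\big(K(0)(1+\sigma^2)^2\big)$ integrated over $\sigma\in[0,\sqrt\mu]$ and fed by the $C^\infty$ Taylor expansion of $K$ at the pole, yields $\tilde w_\lambda(\sigma)=A_\lambda+B_\lambda\sigma^{n-2}+(\text{lower order})$ with $A_\lambda\to1$ and $B_\lambda$ of a controlled sign and size. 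Transcribing back to $r=1$, $2w_\lambda'(1)-(n-2)w_\lambda(1)$ is to leading order a positive multiple of $B_\lambda\mu^{(n-2)/2}-A_\lambda$, so the proof closes once $B_\lambda\mu^{(n-2)/2}>A_\lambda$ is established for all large $\lambda$.

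The main obstacle is exactly this outer estimate — matching $\tilde w_\lambda$ across the far edge $\sigma\sim\sqrt{\mu}$ of the concentration scale, and pinning down the sign and magnitude of $B_\lambda$. This is where the two hypotheses are essential: $C^\infty$ regularity of $K$ at the pole gives a uniform expansion of $K(\mu^{-1/2}\sigma)-K(0)$ over the whole range $\sigma\le\sqrt{\mu}$, so the source term cannot contribute a wrong-sign piece to $B_\lambda$, while the global existence of $v_\lambda$ on $[0,1]$ for \emph{every} $\lambda$ (together with $K\ge0$) rules out the degenerate possibility that $w_\lambda$ vanishes on $[0,1]$, which would otherwise force $B_\lambda\le0$ and $G(\lambda)\le0$. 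The remaining points — that the ``lower order'' terms in the inner and outer expansions are genuinely subleading, and that the Gronwall constants on $[1,\sqrt{\mu}]$ are uniform in $\lambda$ — are routine bookkeeping once this leading-order picture is in place.
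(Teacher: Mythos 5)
Your setup is sound and is in fact the same device the paper itself uses: the quotient $w_\lambda=v_\lambda/V_\lambda$, the divergence-form identity $\bigl(r^{n-1}V_\lambda^2w_\lambda'\bigr)'=-r^{n-1}V_\lambda^{p+1}\bigl(Kw_\lambda^p-K(0)w_\lambda\bigr)$, and the exact formula $G(\lambda)=\lambda(1+\mu)^{-n/2}\bigl[(n-2)(1-\mu)w_\lambda(1)+2(1+\mu)w_\lambda'(1)\bigr]$ are all correct. The gap is in what you then try to extract from them. First, the sign in the lemma as printed is a typo: the paper's own proof of this lemma ends with $G(\lambda)<0$ for large $\lambda$, and Theorem 2 needs exactly that, since it must pair with $G(\lambda)>0$ for small $\lambda$ to produce a zero of $G$. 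Second, and independently of the typo, the inequality to which you reduce everything, $2w_\lambda'(1)-(n-2)w_\lambda(1)>0$, is false: take $K\equiv K(0)$, so that $w_\lambda\equiv1$ and this quantity equals $-(n-2)<0$, while your own formula gives $G(\lambda)=(n-2)\lambda(1+\mu)^{-n/2}(1-\mu)<0$ as soon as $\mu>1$. So the ``main obstacle'' you flag, namely $B_\lambda\mu^{(n-2)/2}>A_\lambda$, is not unfinished bookkeeping; it is the negation of what actually happens, and no amount of Gronwall estimation will establish it.

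The irony is that your concentration analysis, carried out as you describe it, proves the opposite inequality and thereby the corrected lemma. Since the reaction coefficient in the $\sigma$-variable is $O(\sigma^{-4})$, hence integrable up to $\sigma=\sqrt{\mu}$, and since $K(\mu^{-1/2}\sigma)-K(0)$ is uniformly controlled by the Taylor expansion of $K$ at the pole, the outer perturbation argument yields $A_\lambda\to1$ and $B_\lambda\mu^{(n-2)/2}\to0$, i.e.\ $w_\lambda(1)\to1$ and $w_\lambda'(1)\to0$. Feeding this into your formula, the bracket behaves like $-(n-2)\mu\to-\infty$, so $G(\lambda)<0$ for $\lambda$ large. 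This is precisely the paper's route: it shows $T_\lambda\to1$ and $T_\lambda'\to0$ by passing to the limit ODE $r^5T''+(3-n)r^4T'=0$ with the degenerate initial data at $r=0$, deduces $v_\lambda'(1)/v_\lambda(1)\to2-n$, and concludes $G(\lambda)=(n-2)v_\lambda(1)+2v_\lambda'(1)<0$. If you rewrite your final two paragraphs with the target $G(\lambda)<0$ and the limits $w_\lambda(1)\to1$, $w_\lambda'(1)\to0$, your argument closes, and is arguably more quantitative than the paper's (which leaves the uniform passage to the limit ODE, and the claim $T_\lambda^{(k)}(0)=0$ for all $k$, rather underjustified).
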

\begin{proof}
Consider $T_\lambda:=\frac{v_\lambda}{V_\lambda}.$ Since $v_\lambda=V_\lambda T_\lambda,$ we know that $T_\lambda$ satisfies the equation
$$V_\lambda'' T_\lambda+(2V_\lambda'+\frac{n-1}{r}V_\lambda)T_\lambda'+(V_\lambda''+\frac{n-1}{r}V_\lambda')T_\lambda+KV_\lambda^pT_\lambda^p=0$$
$$\Longleftrightarrow V_\lambda'' T_\lambda+(2V_\lambda'+\frac{n-1}{r}V_\lambda)T_\lambda'-KV_\lambda^pT_\lambda+KV_\lambda^pT_\lambda^p=0$$
$$\Longleftrightarrow r^5T_\lambda''+[n-1-\frac{2K(0)}{n}\frac{\lambda^{2\beta}r^2}{1+\frac{K(0)}{n(n-2)}r^2\lambda^{2\beta}}]r^4T_\lambda'+K\frac{\lambda^{2\beta}r^2}{[1+\frac{K(0)}{n(n-2)}r^2\lambda^{2\beta}]^2}r(-T_\lambda+T_\lambda^p)=0.$$
Thus, when $\lambda\rightarrow+\infty,$ the ODEs uniformly converge to
$$r^5T_\infty''+(3-n)r^4T_\infty'=0\ \ .$$
This equation has all solutions of the form
$$T_\infty=Cr^{n-2}+D.$$
Now we need the assumption $K\in C^\infty[0,\varepsilon).$
Since the family of ODEs are degenerate at $r=0,$ we should consider the initial value data $P=\{T(0)=1,\ T^{(k)}(0)=0,\ k=1,2,\cdots\}.$ Being subject to the initial data P, all the ODEs have only one solution. As $v_\lambda(0)=V_\lambda(0), v_\lambda'(0)=V_\lambda'(0),$ we know from the equation that $v_\lambda^{(k)}(0)=V_\lambda^{(k)}(0)$ and hence $T_\lambda^{(k)}(0)=0,\ \forall k=1,2,\cdots$ for $\log T_\lambda=\log v_\lambda-\log V_\lambda.$ Thus the unique solution is $T_\lambda$ and for the limit equation, the unique solution is $T_\infty=1.$ Thus, we know that
$$T_\lambda\longrightarrow1\ \ and\ \ T'_\lambda\longrightarrow0,$$
$$\frac{v_\lambda'}{v_\lambda}-\frac{V_\lambda'}{V_\lambda}=\frac{T_\lambda'}{T_\lambda}\longrightarrow0.$$
Since
$$\lim_{\lambda\rightarrow\infty}\frac{V_\lambda'(1)}{V_\lambda(1)}=2-n\ \ ,$$
we have
$$\lim_{\lambda\rightarrow\infty}\frac{v_\lambda'(1)}{v_\lambda(1)}=2-n\ \ .$$
Consequently, for $\lambda$ large enough, we have $G(\lambda)=(n-2)v_\lambda(1)+2v_\lambda'(1)<0.$
\end{proof}

\begin{thm}
If $K(0)>0,K\geq0,$ K is continuous and $\exists \varepsilon>0$ s.t. $K\in C^\infty[0,\varepsilon)$, then the equation (1.1) has a positive solution on $[0,\infty)$.
\end{thm}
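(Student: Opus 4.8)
The plan is to reuse the shooting/continuity scheme of Theorem~1, with Lemma~3 now playing the role that Lemma~3.4 of [1] played there; the gain is that Lemma~3 needs only $K\in C^\infty[0,\varepsilon)$ near the pole, with no power-type expansion of $K$, which is why it can handle the general situation of Theorem~2.

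First I would split into two cases according to whether $\lambda_0$ is finite. If $\lambda_0<\infty$, then Corollary~3 applies verbatim and (1.1) already has a positive solution on $[0,\infty)$, so there is nothing further to do in this case.

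Now suppose $\lambda_0=\infty$. By the very definition of $\lambda_0$ this means precisely that, for every $\lambda>0$, the solution $v_\lambda$ exists and stays positive on $[0,1]$. In particular the hypotheses of Lemma~3 hold — this is exactly where the assumption $K\in C^\infty[0,\varepsilon)$ enters — so $G(\lambda)=(n-2)v_\lambda(1)+2v_\lambda'(1)<0$ for all $\lambda$ sufficiently large. On the other hand, Corollary~1 gives $G(\lambda)=(n-2)\lambda+o(\lambda)>0$ as $\lambda\downarrow0$. Since in this case solutions exist on the whole compact interval $[0,1]$ for every parameter value, continuous dependence on initial data makes $\lambda\mapsto(v_\lambda(1),v_\lambda'(1))$, hence $G$, continuous on $(0,\infty)$; the intermediate value theorem then produces some $\lambda^{*}>0$ with $G(\lambda^{*})=0$. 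Finally Lemma~1 converts this into a positive solution of (1.1) on $[0,\infty)$ by extending $v_{\lambda^{*}}$ off $[0,1]$ through the Kelvin transform $v(r)=r^{2-n}v(1/r)$.

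I expect no genuine obstacle in assembling Theorem~2: all the analytic content is already contained in Lemma~3, via the quotient trick $T_\lambda=v_\lambda/V_\lambda$ and the degenerate limit ODE $r^5T_\infty''+(3-n)r^4T_\infty'=0$. The only things one must get right here are that the alternatives $\lambda_0<\infty$ and $\lambda_0=\infty$ are exhaustive, and that the case $\lambda_0=\infty$ is exactly what supplies Lemma~3 with its standing hypothesis that $v_\lambda$ exists on $[0,1]$ for all $\lambda$; everything else is continuous dependence and the intermediate value theorem.
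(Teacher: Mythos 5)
Your proposal is correct and follows essentially the same route as the paper: split on whether $\lambda_0$ is finite, invoke Corollary~3 in the first case, and in the case $\lambda_0=\infty$ combine the sign of $G$ at small $\lambda$ (Corollary~1) with the sign at large $\lambda$ (Lemma~3, whose statement has a sign typo but whose proof gives $G(\lambda)<0$), then conclude via the intermediate value theorem and the Kelvin-transform extension. The only cosmetic discrepancy is a label: the extension step is the paper's Lemma~2 (the gluing lemma), not Lemma~1, though the paper itself commits the same misreference in the proof of Theorem~1.
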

\begin{proof}
If $\lambda_0<\infty, $ we already know the existence from Corollary 3. So we only need to treat the case $\lambda_0=\infty$ which means $v_\lambda$ exists and is positive on $[0,1]$ for every $\lambda>0.$ In this case, Lemma 2 and Lemma 3 deduce the result.
\end{proof}

\section{Acknowledgement}
The author is grateful to Professor Xingwang Xu for useful discussions and support all along.

\end{document}